\definecolor{bgcolor}{RGB}{250,250,250}
\definecolor{softgreen}{RGB}{0,128,0}
\definecolor{softblue}{RGB}{0,102,204}
\definecolor{softgray}{RGB}{150,150,150}
\definecolor{matstring}{RGB}{163, 21, 21}
\lstdefinelanguage{Matlab}{
  morekeywords={
    break,case,catch,continue,else,elseif,end,for,function,global,if,
    otherwise,persistent,return,switch,try,while,zeros,ones,plot,exp,cos,sin,
    sqrt,meshgrid,quiver,streamline,hold,axis,real,imag,abs
  },
  sensitive=true,
  comment=[l]{\%},
  morestring=[b]'
}
\lstdefinestyle{matlabIDE}{
  language=Matlab,
  backgroundcolor=\color{bgcolor},
  basicstyle=\ttfamily\small,
  frame=single,
  rulecolor=\color{softgray},
  framerule=0.6pt,
  keywordstyle=\color{softgreen}\bfseries,
  commentstyle=\color{softgray}\itshape,
  stringstyle=\color{matstring},
  numberstyle=\tiny\color{softgray},
  numbers=left,
  numbersep=8pt,
  breaklines=true,
  showstringspaces=false,
  showtabs=false,
  tabsize=4,
  captionpos=b,
  keepspaces=true,
  columns=flexible
}
\numberwithin{figure}{section}
\numberwithin{equation}{section}
\newtheorem{thm}{Theorem}[section]
\newtheorem{defn}[thm]{Definition}
\newtheorem{remark}[thm]{Remark}
\renewcommand{\epsilon}{\varepsilon}
\def\<#1{\langle #1\rangle}
\title{Irregular traces of multiple SLE(0) systems with multiple marked points}
\author{
    Jiaxin Zhang\footnotemark[1]
}
\begin{document}

\maketitle
\footnotetext[1]{\textbf{zhangjx@caltech.edu}, Department of Mathematics, California Institute of Technology}

\begin{abstract}

In this supplementary note, we study the traces of multiple SLE(0) systems with two or more additional marked points.

For general chordal configurations, the traces correspond to the real locus of real rational functions; in the radial case, they correspond to the horizontal trajectories of residue-free quadratic differentials. In both settings, we establish the regularity of the trajectories near singularities: no spiraling occurs, and no two trajectories asymptotically converge to the same direction.

Moreover, in the radial case with non-zero spin at the marked interior point, we show that the spin induces a spiraling behavior at the marked interior point.

However, this regularity breaks down when multiple interior marked points are present. In such cases, trajectories may asymptotically approach the same direction, and spiraling can occur even in the absence of spin. We present explicit counterexamples generated using MATLAB, with code provided for reference.
\vspace{1em}

\end{abstract}

\newpage
\tableofcontents

\newpage

\maketitle

\section{Introduction}
The Schramm–Loewner evolution SLE($\kappa$) for $\kappa > 0$ is a one-parameter family of random, conformally invariant curves in the plane that describe interfaces in critical models from statistical physics. This framework was introduced in \cites{Sch00, LSW04, Smi06, Sch07, SS09}. Conformal field theory (CFT), a quantum field theory invariant under conformal transformations, has also been extensively employed to analyze critical phenomena; see, for example, \cites{Car96, FK04}. 

SLE and multiple SLE systems can be coupled to conformal field theory through the SLE–CFT correspondence. This correspondence provides a powerful framework for predicting behaviors and computing quantitative observables of SLE($\kappa$) and multiple SLE($\kappa$) systems from a CFT perspective, as demonstrated in works such as \cites{BB03a, Car03, FW03, FK04, Dub15a, Pel19}. The parameter $\kappa$ measures the fractal roughness of the curves and determines the central charge of the associated CFT via the relation
\[
c(\kappa) = \frac{(3\kappa - 8)(6 - \kappa)}{2\kappa}.
\]

In recent years, there has been growing interest in multiple chordal SLE systems, as discussed in \cites{Dub06, KL07, Law09b, FK15a, PW19, PW20}. Multiple radial SLE systems have also been actively studied, with notable contributions including \cites{HL21, WW24, MZ24, MZ25, Zh25th}. Generalizations to arbitrary multiple chordal SLE configurations have appeared in \cites{Z25a, Z25b}.

In particular, multiple SLE(0) systems, which arise as deterministic limits of multiple SLE($\kappa$) as $\kappa \to 0$, have attracted attention. It has been shown that for multiple radial SLE(0), the traces are horizontal trajectories of residue-free quadratic differentials, while for general multiple chordal SLE(0) systems, the traces correspond to the real locus of rational functions.

Additionally, in the radial case, if a non-zero spin is assigned to an interior marked point, the induced spin causes the corresponding trajectory to spiral at the origin.

In this paper, we introduce a general framework for multiple SLE(0) systems with multiple marked points. More precisely, we define these systems with respect to a symmetric divisor, following constructions similar to those of multiple chordal and radial SLE(0) systems. 

Consider distinct real growth points \( \boldsymbol{x} = \{x_1, x_2, \ldots, x_n\} \subset \mathbb{R} \) and additional marked points \( \boldsymbol{q} = \{q_1, q_2, \ldots, q_m\} \subset \mathbb{C} \), which are closed under complex conjugation (i.e., \( q_j \in \boldsymbol{q} \) implies \( \overline{q_j} \in \boldsymbol{q} \)). We associate to these points a divisor
\[
\boldsymbol{\sigma} = \sum_{k=1}^n x_k + \sum_{j=1}^m \sigma_j \cdot q_j,
\]
where \( \sigma_j \in \mathbb{C} \). The divisor \( \boldsymbol{\sigma} \) is called \emph{symmetric} if it is invariant under complex conjugation:
\[
\boldsymbol{\sigma}^* := \sum_{k=1}^n x_k + \sum_{j=1}^m \overline{\sigma_j} \cdot \overline{q_j} = \boldsymbol{\sigma}.
\]

The behavior of the traces of such systems becomes highly intricate when the charges $\sigma_j$ are not half-integers. 

In the special case where all \( \sigma_j \in \tfrac{1}{2}\mathbb{Z} \), the SLE(0) traces can be described as horizontal trajectories of a suitable quadratic differential. However, even in this half-integer setting, the regularity of the trajectories may break down in the presence of multiple interior marked points. For instance, two trajectories may asymptotically converge in the same direction, or exhibit spiraling behavior—even in the absence of spin.

Explicit counterexamples illustrating these phenomena are provided in Section~\ref{examples}. These examples were generated using \textsc{MATLAB}, and the associated code is included for reference.

\section{Multiple SLE(0) systems associated with a symmetric divisor}

\subsection{Classical limit of Coulomb gas correlation}
This subsection is based on Subsection 2.5 of \cite{MZ24}. We construct the Coulomb gas correlation for $\kappa=0$, which will be used in the definition of multiple SLE(0) systems.

\begin{defn}[Normalized Coulomb gas correlations for a divisor on the Riemann sphere] Let the divisor
$$
\boldsymbol{\sigma}=\sum \sigma_j \cdot z_j,
$$
where $\left\{z_j\right\}_{j=1}^n$ is a finite set of distinct points on $\widehat{\mathbb{C}}$ and $\sigma_j \in \mathbb{C}, j=1,2,\ldots,n$.
The normalized Coulomb gas correlation $C[\boldsymbol{\sigma}]$ is a differential of conformal dimension $\lambda_j$ at $z_j$ by

Let $\lambda(\sigma)=\sigma^2+2\sigma  \quad(\sigma \in \mathbb{R})$.
\begin{equation}
\lambda_j=\lambda_b\left(\sigma_j\right) \equiv \sigma_j^2+2\sigma_j  ,
\end{equation}

whose value is given by

\begin{equation}
C[\boldsymbol{\sigma}] =\prod_{j<k}\left(z_j-z_k\right)^{2\sigma_j \sigma_k} ,
\end{equation}
where the product is taken over all finite $z_j$ and $z_k$. The Coulomb gas correlation is multivalued. To make it single-valued, one must choose a branch of the logarithm, i.e., fix a branch cut convention.

\end{defn}

\begin{remark}
    The normalized Coulomb gas correlation can be viewed as taking the $\kappa \rightarrow 0$ limit of the divisor $\sqrt{2\kappa}\boldsymbol{\sigma}$ satisfying the neutrality condition ($NC_b$), the Coulomb gas correlation function $C_{(b)}[\boldsymbol{\sigma}]^{\kappa}$, and conformal dimension $\kappa \lambda_j$.
\end{remark}

\begin{defn}[Neutrality condition when $\kappa=0$] A divisor $\boldsymbol{\sigma}: \widehat{\mathbb{C}} \rightarrow \mathbb{R}$ satisfies the neutrality condition if
\begin{equation}
\int \boldsymbol{\sigma}=-2.
\end{equation}
\end{defn}
\begin{thm}
Under the neutrality condition $\int\boldsymbol{\sigma}=-2$, the normalized Coulomb gas correlation differentials $C[\boldsymbol{\sigma}]$ are Möbius invariant on $\hat{\mathbb{C}}$.
\end{thm}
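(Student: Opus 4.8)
The plan is to reduce the statement to a direct verification of the Möbius covariance relation. Recall that asserting that $C[\boldsymbol{\sigma}]$ is a Möbius-invariant differential of conformal dimension $\lambda_j$ at $z_j$ means exactly that for every Möbius transformation $\mu(z) = \frac{az+b}{cz+d}$ with $ad - bc \neq 0$ we have
\[
C[\mu(z_1),\ldots,\mu(z_n)] = \prod_{j=1}^n \mu'(z_j)^{-\lambda_j}\, C[z_1,\ldots,z_n].
\]
Thus the entire content of the theorem is to extract the weight $\prod_j \mu'(z_j)^{-\lambda_j}$ out of the defining product $\prod_{j<k}(z_j-z_k)^{2\sigma_j\sigma_k}$, and to see that the neutrality condition is precisely what makes the bookkeeping close.

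The key computational input is the factorization identity for a Möbius map. Using $\mu'(z) = (ad-bc)/(cz+d)^2$, a one-line computation gives
\[
\mu(z_j) - \mu(z_k) = \frac{(ad-bc)(z_j-z_k)}{(cz_j+d)(cz_k+d)} = \mu'(z_j)^{1/2}\,\mu'(z_k)^{1/2}\,(z_j-z_k).
\]
Raising to the power $2\sigma_j\sigma_k$ and multiplying over all pairs $j<k$, I would obtain
\[
C[\mu(\boldsymbol{z})] = C[\boldsymbol{z}] \prod_{j<k} \mu'(z_j)^{\sigma_j\sigma_k}\,\mu'(z_k)^{\sigma_j\sigma_k}.
\]
Collecting the exponent attached to a fixed factor $\mu'(z_j)$ — it receives $\sigma_j\sigma_k$ from every pair that contains $z_j$ — gives a total exponent $\sigma_j \sum_{k\neq j}\sigma_k$. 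At this stage the neutrality condition $\sum_k \sigma_k = -2$ does all the work: it forces $\sum_{k\neq j}\sigma_k = -2 - \sigma_j$, so the exponent of $\mu'(z_j)$ equals $\sigma_j(-2-\sigma_j) = -(\sigma_j^2 + 2\sigma_j) = -\lambda_j$, matching the definition $\lambda_j = \sigma_j^2 + 2\sigma_j$ exactly. This establishes the covariance relation for every $\mu$ that keeps all the points finite.

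The hard part will be the point at infinity: either $z_j = \infty$ already occurs in the divisor, or $\mu$ maps some finite $z_j$ to $\infty$. In both cases the product formula and the factorization identity must be read through the local coordinate $w = 1/z$ near infinity, and one must check that the conformal weight at infinity transforms consistently. I would handle this by a limiting argument: place the offending point at radius $R$, run the finite computation above, and send $R \to \infty$; the neutrality condition is exactly what makes the would-be divergent prefactors cancel, leaving a finite limit with the correct weight at infinity. Equivalently, since the Möbius group is generated by translations, dilations, and the inversion $z \mapsto -1/z$, it suffices to check invariance on these generators, where only the inversion triggers the infinity bookkeeping. A final routine point is to fix a branch of the logarithm so that the multivalued powers on the two sides are compatible; this affects neither the exponent count nor the conclusion.
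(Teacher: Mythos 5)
Your computation is correct and is exactly the ``direct computation'' that the paper's proof invokes without writing out: the paper's entire argument is the one-line remark that the claim follows by direct computation as in the $\kappa>0$ case, and your factorization $\mu(z_j)-\mu(z_k)=\mu'(z_j)^{1/2}\mu'(z_k)^{1/2}(z_j-z_k)$ together with the exponent count $\sigma_j\sum_{k\neq j}\sigma_k=\sigma_j(-2-\sigma_j)=-\lambda_j$ is precisely that computation. Your treatment of the point at infinity and of branch choices is only sketched, but it is the standard bookkeeping and supplies more detail than the paper itself gives.
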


\begin{proof}
By direct computation, similar to the $\kappa>0$ case.

\begin{defn}[Symmetric divisor]
Let $\boldsymbol{\sigma} = \sum \sigma_j \cdot z_j$ be a divisor on the Riemann sphere $\widehat{\mathbb{C}}$. We say that $\boldsymbol{\sigma}$ is \emph{symmetric with respect to the real line} $\mathbb{R}$ if
\[
\boldsymbol{\sigma}^* = \boldsymbol{\sigma},
\]
where $\boldsymbol{\sigma}^* = \sum \overline{\sigma_j} \cdot \overline{z_j}$ is the reflection of $\boldsymbol{\sigma}$ under complex conjugation.
\end{defn}

\end{proof}

\subsection{Multiple SLE(0) with multiple marked points}
\begin{defn}[Multiple chordal SLE(0) Loewner chain] \label{multiple chordal SLE(0) Loewner chain via stationary relations}
In the upper half plane $\mathbb{H}$, given growth points $\boldsymbol{x}=\{x_1,x_2,\ldots,x_n\}$ on the real line $\mathbb{R}$,  $\boldsymbol{q}=\{q_1,q_2,\ldots,q_m\}$ closed under conjudgation. A symmetric divisor $\boldsymbol{\sigma}=\sum_{k=1}^{n} x_k+ \sum_{k=1}^{m} \sigma_k \cdot q_k$ where $\sigma_ k \in \mathbb{C}, k =1,2,\ldots,m$.

Let $\boldsymbol{\nu}=\left(\nu_1, \ldots, \nu_n\right)$ be a set of parametrizations of the capacity, where each $\nu_i:[0, \infty) \rightarrow[0, \infty)$ is assumed to be measurable. 

In the upper half plane $\Omega=\mathbb{H}$, we define the multiple SLE(0) 
Loewner chain as a normalized conformal map $g_t=g_t(z)$, with $g_{0}(z)=z$ whose evolution is described by the Loewner differential equation:
\begin{equation}
\partial_t g_t(z)=\sum_{j=1}^n \frac{2\nu_j(t)}{g_t(z)-x_j(t)}, \quad g_0(z)=z,
\end{equation}
and the driving functions $x_j(t), j=1, \ldots, n$, evolve as

\begin{equation}\label{multiple SLE 0 driving}
\dot{x}_j= \nu_j(t) \frac{\partial {\rm log} \mathcal{Z}(\boldsymbol{x},\boldsymbol{q})}{\partial x_j}+\sum_{k \neq j} \frac{2\nu_k(t)}{x_j-x_k}
\end{equation}
where
\begin{equation}
    \mathcal{Z}(\boldsymbol{x}, \boldsymbol{q})= C[\boldsymbol{\sigma}]= \prod_{1 \leq i<j \leq n}(x_i-x_j)^{2} \prod_{1 \leq i<j \leq m}(q_i-q_j)^{2\sigma_i\sigma_j} \prod_{i=1}^{n} \prod_{j=1}^m\left(x_i-q_j\right)^{2\sigma_j}
\end{equation}
The logarithmic derivative of $\mathcal{Z}(\boldsymbol{x}, \boldsymbol{q})$ with respect to $x_j$ (treating $x$ and $q$ as independent variables) is given by:
\begin{equation}
   \frac{\partial \mathcal{Z}(\boldsymbol{x},\boldsymbol{q})}{\partial x_j}=\sum_{k\neq j} \frac{2}{x_j-x_k}-2\sum_{l} \frac{\sigma_l}{x_j-q_l}
\end{equation}
for $j=1,\ldots,n$,
The flow map $g_t$ is well-defined up to the first time $\tau$ at which $x_j(t)=x_k(t)$ for some $1 \leq j<$ $k \leq n$. For each $z \in \mathbb{C}$, the process $t \mapsto g_t(z)$ is well-defined up to the time $\tau_z \wedge \tau$, where $\tau_z$ is the first time at which $g_t(z)=z_j(t)$. The hull associated with this Loewner chain is denoted by

$$
K_t=\left\{z \in \overline{\mathbb{H}}: \tau_z \leq t\right\}
$$

\end{defn}
\subsection{Traces as horizontal trajectories of quadratic differentials for half-integer charges distribution}

While the SLE(0) traces exist for arbitrary symmetric charge configurations $\boldsymbol{\sigma} = \sum_{k=1}^{n}x_k +\sum_{k=1}^{m}\sigma_k \cdot q_k$, they are not always characterized by rational functions or quadratic differentials.

We show that when all charges $\sigma_j$ are half-integers, the corresponding traces can be described as horizontal trajectories of a meromorphic quadratic differential.

This characterization holds in particular for general chordal SLE(0) systems with one additional boundary marked point, and for radial SLE(0) systems with a single interior marked point.

However, in the presence of two or more additional marked points, this regularity may break down: trajectories can exhibit irregular behavior, such as asymptotically converging to the same direction or developing spirals, even without introducing spin.

\begin{defn}[Quadratic differentials with prescribed zeros and poles]
\label{trace quadratic differential}
Let \( \boldsymbol{x} = \{x_1, x_2, \ldots, x_n \} \) be a set of distinct points on the real line \( \mathbb{R} \), and let \( \boldsymbol{q} = \{q_1, \ldots, q_m\} \subset \widehat{\mathbb{C}} \) be a set of distinct points closed under conjudgation. Consider a symmetric divisor
\[
\boldsymbol{\sigma} = \sum_{k=1}^{n} x_k + \sum_{j=1}^{m} \sigma_j \cdot q_j,
\]
where \( 2\sigma_j \in \mathbb{Z} \) for all \( j = 1, \ldots, m \). 

We define the class of quadratic differentials \( \mathcal{QD}(\boldsymbol{x}, \boldsymbol{q}) \) as those meromorphic quadratic differentials \( Q(z)dz^2 \) on the Riemann sphere \( \widehat{\mathbb{C}} \) satisfying the following properties:

\begin{enumerate}
    \item \( Q(z)dz^2 \) is symmetric under complex conjugation, i.e.,
    \[
    \overline{Q(\bar{z}) \, d\bar{z}^2} = Q(z) \, dz^2.
    \]

    \item \( Q(z) \) has simple zeros of order 2 at each \( x_k \in \boldsymbol{x} \), for \( k = 1, \ldots, n \).

    \item At each \( q_j \in \boldsymbol{q} \), \( Q(z) \) has a zero or a pole of order \( 2\sigma_j \in \mathbb{Z} \). The neutrality condition requires that the total sum of orders satisfies
    \[
    \sum_{j=1}^{n} 2 + \sum_{j=1}^{m} 2\sigma_j = -4,
    \]
    which ensures that \( Q(z)dz^2 \) is a global quadratic differential on \( \widehat{\mathbb{C}} \).
\end{enumerate}

Quadratic differentials \( Q(z)dz^2 \in \mathcal{QD}(\boldsymbol{x}, \boldsymbol{q}) \) must take the following form:
$$
Q(z)= 
\prod_{j=1}^{n}\left(z-x_j\right)^2\prod_{k=1}^{m}\left(z-q_k\right)^{2q_k},
$$
\end{defn}

In the main theorem (\ref{traces as horizontal trajectories}), we show that the traces of the multiple SLE(0) systems correspond precisely to the horizontal trajectories of this class of quadratic differentials $Q(z) \in \mathcal{QD}(\boldsymbol{x},\boldsymbol{q})$ with limiting ends at $\boldsymbol{x}=\{x_1,x_2,\ldots,x_n\}$.
\begin{thm}\label{traces as horizontal trajectories}
Let $\boldsymbol{x} = \{x_1, x_2, \ldots, x_n\} \subset \partial \mathbb{H}$ be a collection of distinct growth points on the real line, and let $\boldsymbol{q} = \{q_1, q_2, \ldots, q_m\} \subset \mathbb{C}$ be a conjugation-symmetric set of marked points in the complex plane. Consider the symmetric divisor
\[
\boldsymbol{\sigma} = \sum_{k=1}^{n} x_k + \sum_{j=1}^{m} \sigma_j \cdot q_j,
\]
where \( 2\sigma_j \in \mathbb{Z} \) for all \( j = 1, \ldots, m \), and the total charge satisfies the neutrality condition
\[
\sum_{j=1}^{n} 1 + \sum_{j=1}^{m} \sigma_j = -2.
\]

Then there exists a quadratic differential \( Q(z) \in \mathcal{QD}(\boldsymbol{z}) \), with zeros at $\boldsymbol{z}$ and poles of order $2\sigma_j$ at $q_j$, such that the Loewner hulls $K_t$ generated by the multiple Loewner flow with driving measure $\boldsymbol{\nu}(t)$ are contained in the horizontal trajectories of the quadratic differential \( Q(z)\,dz^2 \), with terminal directions tending toward the critical points $\boldsymbol{z}$. This description holds up to any time \( t \) strictly before the collision of any critical points or singularities.

Moreover, up to such a time \( t \), the pullback of the quadratic differential under the Loewner map satisfies
\[
Q(z) \circ g_t^{-1} \in \mathcal{QD}(\boldsymbol{z}(t), \boldsymbol{q}(t)),
\]
where $\boldsymbol{z}(t)$ denotes the evolving configuration of critical points under the flow at time \( t \).
\end{thm}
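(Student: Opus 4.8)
The plan is to establish the invariance statement first---that the pushed-forward differential stays in $\mathcal{QD}(\boldsymbol z(t),\boldsymbol q(t))$---and then to deduce the containment of the hulls in horizontal trajectories, since trajectories are conformally natural. Existence of $Q$ is immediate: the prescribed data determine the monic quadratic differential
\[
Q(z)\,dz^2=\prod_{j=1}^n(z-x_j)^2\prod_{k=1}^m(z-q_k)^{2\sigma_k}\,dz^2,
\]
and the half-integer hypothesis $2\sigma_k\in\mathbb Z$ is exactly what makes every exponent an integer, so that $Q$ is single-valued; the conjugation symmetry in Definition~\ref{trace quadratic differential}(1) follows from $x_j\in\mathbb R$ together with the conjugation-closedness of the pairs $(q_k,\sigma_k)$; and the neutrality condition $\sum_j 1+\sum_k\sigma_k=-2$ forces the total order $\sum_j 2+\sum_k 2\sigma_k=-4$, which is precisely the degree of a quadratic differential on $\widehat{\mathbb C}$, so $Q\,dz^2$ is global with neither zero nor pole at $\infty$.

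Writing $w=g_t(z)$, letting $x_j(t)$ be the driving functions and setting $q_k(t):=g_t(q_k)$, I would introduce the time-dependent candidate $Q_t(w):=\prod_{j}(w-x_j(t))^2\prod_{k}(w-q_k(t))^{2\sigma_k}$ and reduce everything to the conservation law $g_t^{\ast}(Q_t\,dw^2)=Q\,dz^2$. Concretely, set $F(z,t):=Q_t(g_t(z))\,g_t'(z)^2$, so that $F(\cdot,0)=Q$; the goal becomes $\partial_t F\equiv 0$. Using $\partial_t g_t(z)=\sum_j 2\nu_j/(w-x_j)$ and $\partial_t g_t'(z)=-\sum_j 2\nu_j g_t'(z)/(w-x_j)^2$, the chain rule turns $\partial_t F=0$ into the vanishing of the rational function
\[
\mathcal E(w):=\partial_t\log Q_t+(\partial_w\log Q_t)\sum_{j}\frac{2\nu_j}{w-x_j}-2\sum_{j}\frac{2\nu_j}{(w-x_j)^2},
\]
where $\partial_w\log Q_t=\sum_j\frac{2}{w-x_j}+\sum_k\frac{2\sigma_k}{w-q_k}$ and $\partial_t\log Q_t=-\sum_j\frac{2\dot x_j}{w-x_j}-\sum_k\frac{2\sigma_k\dot q_k}{w-q_k}$.

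The core step is to prove $\mathcal E\equiv 0$ by showing it has no poles on $\widehat{\mathbb C}$. The double poles at each $x_j$ cancel structurally, since the $1/(w-x_j)^2$ coefficient of $(\partial_w\log Q_t)\sum_j 2\nu_j/(w-x_j)$ equals $4\nu_j$ and cancels the final term. The residue at $q_k$ equals $-2\sigma_k\bigl(\dot q_k-\sum_j 2\nu_j/(q_k-x_j)\bigr)$, which vanishes precisely because $q_k(t)=g_t(q_k)$ obeys the Loewner ODE. The residue at $x_j$ vanishes iff
\[
\dot x_j=\sum_{i\neq j}\frac{2\nu_i}{x_j-x_i}+\nu_j\Bigl(\sum_{i\neq j}\frac{2}{x_j-x_i}+\sum_k\frac{2\sigma_k}{x_j-q_k}\Bigr),
\]
which is exactly the stationary driving relation~\eqref{multiple SLE 0 driving} once $\partial_{x_j}\log\mathcal Z$ is computed directly from $\mathcal Z$. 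Finally every summand of $\mathcal E$ is $O(1/w)$ as $w\to\infty$, so there is no pole at $\infty$; a rational function holomorphic on all of $\widehat{\mathbb C}$ and vanishing at $\infty$ is identically zero. Hence $F\equiv Q$ and $Q_t\,dw^2=(g_t)_{\ast}(Q\,dz^2)\in\mathcal{QD}(\boldsymbol z(t),\boldsymbol q(t))$, because $g_t$ is conformal on $\mathbb H\setminus K_t$ and preserves the orders of zeros and poles.

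From the conserved differential I would deduce the geometric claim: since $g_t$ is conformal and $g_t^{\ast}(Q_t\,dw^2)=Q\,dz^2$, it carries horizontal trajectories of $Q$ onto those of $Q_t$, and each growth point $x_j(t)$ is a double zero of $Q_t$. To see that each trace is itself a horizontal trajectory, I would examine the tip $\gamma_j(t)=g_t^{-1}(x_j(t))$: the increment added between times $t$ and $t+dt$ maps under $g_t$ to a horizontal prong of $Q_t$ issuing from the double zero $x_j(t)$, and pulling back by the trajectory-preserving map $g_t^{-1}$ shows the increment lies on a $Q$-trajectory; integrating in $t$ identifies the whole trace with a horizontal trajectory running into a critical point of $\boldsymbol z$, with everything valid up to the first collision of critical points or singularities. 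The main obstacle is precisely this last passage: the conservation law is residue bookkeeping that the driving relations were designed to satisfy, whereas the genuine difficulty lies in rigorously moving from the preserved differential to the statement that the swallowed Loewner hull lies on trajectories---in particular, verifying that growth at each double zero proceeds along the \emph{horizontal} foliation, and along the intended prong rather than a neighboring one, together with uniform control of the configuration up to the collision time so that no zero meets a pole prematurely.
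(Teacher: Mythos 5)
Your proposal is correct and follows essentially the same route as the paper: your conserved quantity $F(z,t)=Q_t(g_t(z))\,g_t'(z)^2$ is precisely the integral of motion $N_t(z)$ of Theorem~\ref{integral of motion in H}, which the paper identifies as the key ingredient and justifies only by reference to the chordal/radial cases, via exactly the residue bookkeeping you carry out before passing to the trajectory statement. One remark: your residue cancellation at $x_j$ requires $\partial_{x_j}\log\mathcal{Z}=\sum_{k\neq j}\tfrac{2}{x_j-x_k}+2\sum_{l}\tfrac{\sigma_l}{x_j-q_l}$, i.e.\ the sign obtained by differentiating $\mathcal{Z}$ directly, which differs from the sign printed in the paper's displayed formula accompanying \eqref{multiple SLE 0 driving}; your version is the internally consistent one.
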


\begin{proof}
The proof is similar to the multiple chordal and multiple radial SLE(0) cases.
\end{proof}

The key ingredient in the proof of theorem (\ref{traces as horizontal trajectories}) is the integral of motion for the Loewner flows. This integral of motion, denoted by $N_t(z)$, arises as the classical limit of a martingale observable inspired by conformal field theory. For systematic and rigorous study of such conformal field theories, please refer to \cites{KM13,KM21}.
The key ingredient in the proof is to construct an integral of motion for the multiple chordal Loewner flows.
\begin{thm}\label{integral of motion in H}
Let $x_1,x_2,\ldots,x_n$ be distinct points and $u$ a marked point on the real line, for each $z \in \mathbb{C}$
\begin{equation}
N_t(z)=(g'_{t}(z))^2\prod_{k=1}^{n}(g_t(z)-x_k(t))^2\prod_{j=1}^{m}(g_t(z)-q_j(t))^{2\sigma_j}
\end{equation}
is an integral of motion on the interval $[0,\tau_t \wedge \tau)$ for the multiple chordal Loewner flow with parametrization $\nu_j(t)$.
\end{thm}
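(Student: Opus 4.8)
The plan is to show that $N_t(z)$ has vanishing logarithmic time-derivative, i.e. $\partial_t \log N_t(z) = 0$ for every fixed $z$ on $[0,\tau_z\wedge\tau)$, so that $N_t(z)$ is constant along the flow. Writing $G = g_t(z)$ for brevity, I would first record the evolution of the data: the growth points obey the driving ODE \eqref{multiple SLE 0 driving}, while the marked points are merely transported by the flow, $q_j(t) = g_t(q_j)$, so that
\[
\dot q_j(t) = \sum_{l=1}^n \frac{2\nu_l(t)}{q_j(t)-x_l(t)}.
\]
Differentiating $\log N_t(z) = 2\log g_t'(z) + 2\sum_k \log(G-x_k) + 2\sum_j \sigma_j\log(G-q_j)$ in $t$, and using the Loewner equation together with the identity $\partial_t g_t'(z)/g_t'(z) = -\sum_k 2\nu_k/(G-x_k)^2$ (obtained by differentiating the Loewner equation in $z$), the quantity $\partial_t\log N_t(z)$ becomes an explicit rational function $R(G)$ of the single variable $G$. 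The whole argument then reduces to showing $R\equiv 0$.

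First I would observe that the only possible poles of $R$ sit at the growth points $G = x_k$ and the marked points $G = q_j$, so that proving $R\equiv 0$ amounts to checking that every principal part vanishes and that $R$ decays at infinity. Two of the cancellations are immediate. The double poles at $G = x_k$ cancel at once: the contribution $-4\nu_k/(G-x_k)^2$ coming from $(g_t'(z))^2$ is exactly offset by the diagonal term of $\prod_k(G-x_k)^2$. The simple pole at each $q_j$ also cancels on its own, because the bracket multiplying $\sigma_j/(G-q_j)$ equals $\sum_l 2\nu_l/(G-x_l) - \dot q_j$, which vanishes at $G = q_j$ precisely by the transport law for $q_j(t)$ recorded above; here the fact that $q_j$ carries no independent drift is what saves us.

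The hard part will be the cancellation of the simple poles at the growth points $G = x_k$, and this is where the specific form of $\mathcal Z$ must be used. I would collect the three sources of residue there: the off-diagonal cross terms from $\prod_k(G-x_k)^2$, the drift $-\dot x_k$, and—crucially—the residue produced by the $\prod_j(G-q_j)^{2\sigma_j}$ factor, which carries all the $\sigma_j$-dependence. Substituting the driving ODE \eqref{multiple SLE 0 driving}, the pure two-body pieces $\sum_{l\neq k}\nu_l/(x_k-x_l)$ and $\nu_k\sum_{l\neq k}1/(x_k-x_l)$ should cancel against the matching parts of $\dot x_k$, leaving exactly the term $\nu_k\sum_j \sigma_j/(x_k-q_j)$ arising from the logarithmic derivative $\partial_{x_k}\log\mathcal Z$. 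This is then matched term-by-term against the residue of the $q_j$-factor at $x_k$. Verifying that these two $\sigma$-dependent contributions appear with opposite sign is the delicate bookkeeping step, and it is precisely the computation that dictates the form of $\mathcal Z$ and the sign convention in \eqref{multiple SLE 0 driving}.

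Once all residues are shown to vanish, $R$ is a rational function of $G$ with no finite poles, hence a polynomial; since each of its terms is $O(1/G)$ as $G\to\infty$, the polynomial tends to $0$ at infinity and must therefore be identically zero. This gives $\partial_t\log N_t(z)\equiv 0$, so $N_t(z)$ is an integral of motion on $[0,\tau_z\wedge\tau)$. The whole computation is the $\kappa\to 0$ analogue of the martingale-observable calculations for multiple chordal and radial SLE, and the bookkeeping is best organized by cataloguing $R$ according to its poles rather than expanding everything explicitly.
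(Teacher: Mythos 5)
Your proposal is correct and is exactly the ``direct computation'' that the paper invokes without writing out: expressing $\partial_t\log N_t(z)$ as a rational function of $G=g_t(z)$, cancelling the double poles at the $x_k$ against $\partial_t\log g_t'$, killing the simple poles at the $q_j$ via the transport law $\dot q_j=\sum_l 2\nu_l/(q_j-x_l)$, cancelling the simple poles at the $x_k$ using the driving ODE, and concluding from decay at infinity, is precisely the argument used in the chordal and radial SLE(0) cases that the paper's one-line proof refers to. The only caution for your ``delicate bookkeeping step'' is that the residue at $G=x_k$ vanishes when one uses the actual logarithmic derivative of the displayed product, $\partial_{x_k}\log\mathcal{Z}=\sum_{l\neq k}\tfrac{2}{x_k-x_l}+2\sum_j\tfrac{\sigma_j}{x_k-q_j}$ (the sign of the $\sigma_l$ term in the paper's formula for $\partial\mathcal{Z}/\partial x_j$ is inconsistent with its own formula for $\mathcal{Z}$), after which the $\sigma$-dependent contributions do cancel term by term as you predict.
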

\begin{proof}
    By direction computation, similar to the multiple chordal SLE(0) and multiple radial SLE(0) cases.
\end{proof}

\newpage
\section{Examples} \label{examples}
In this section, we present several examples of multiple SLE(0) systems with multiple marked points. The main focus is to illustrate behaviors such as multiple trajectories asymptotically converging in the same direction, or exhibiting spiraling behavior without introducing spin.

We consider the multiple SLE(0) systems with symmetric charge distribution $\boldsymbol{q}= \sum_{j=1}^{n} x_j + \sum_{j=1}^{m} \sigma_j \cdot q_j$, where $\sigma_j \in \frac{1}{2}\mathbb{Z} $

For a multiple radial SLE(0) system with growth points $\boldsymbol{x} = \{x_1, \dots, x_n\}$ and marked points $\boldsymbol{q} = \{q_1, \dots, q_m\}$, the associated quadratic differential takes the form:
\[
Q(z)\,dz^2 = 
\prod_{j=1}^{n}(z - x_j)^2 \prod_{k=1}^{m}(z - q_k)^{2\sigma_k} \, dz^2,
\]
where $\sigma_k \in \tfrac{1}{2}\mathbb{Z}$ denotes the (half-integer) charge at $q_k$.

\begin{thm}
Let $Q(z) \in \mathcal{QD}(\boldsymbol{x},\boldsymbol{q})$ be a quadratic differential associated with the symmetric divisor $\boldsymbol{\sigma}$. Define a vector field $v_Q$ on $\widehat{\mathbb{C}}$ by
\[
v_Q(z) = \frac{1}{\sqrt{Q(z)}},
\]
where
\[
\sqrt{Q(z)} = 
\prod_{j=1}^{n}(z - x_j) \prod_{k=1}^{m}(z - q_k)^{\sigma_k}.
\]
The flow lines of the differential equation $\dot{z} = v_Q(z)$ coincide with the horizontal trajectories of the quadratic differential $Q(z)\,dz^2$.
\end{thm}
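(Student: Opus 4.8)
The plan is to establish the equivalence locally in the natural (flat) coordinate of the quadratic differential and then to address the global branch issues. First I would recall that a horizontal trajectory of $Q(z)\,dz^2$ is, by definition, a maximal curve $z=z(t)$ along which $Q(z)\,dz^2>0$, i.e. $Q(z(t))\,\dot z(t)^2$ is real and positive. Given this, the forward inclusion is an immediate computation: if $z(t)$ solves $\dot z = v_Q(z) = 1/\sqrt{Q(z)}$ on a region where a single-valued branch of $\sqrt{Q}$ has been fixed, then $\sqrt{Q(z)}\,\dot z = 1$, so $Q(z)\,\dot z^2 = 1 > 0$. Hence $Q\,dz^2 = dt^2$ along the flow, which shows every integral curve of $v_Q$ lies on a horizontal trajectory and, moreover, that $t$ is the natural length parameter of the flat metric $\lvert Q\rvert^{1/2}\lvert dz\rvert$.

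For the converse, I would introduce the natural coordinate $w = \Phi(z) = \int^z \sqrt{Q(\zeta)}\,d\zeta$, which is a local biholomorphism on any simply connected neighborhood omitting the zeros and poles of $Q$, with $dw = \sqrt{Q}\,dz$ and hence $dw^2 = Q\,dz^2$. In this coordinate the horizontal trajectories are exactly the preimages of the horizontal lines $\Im w = \text{const}$, while the ODE transforms into $\dot w = \sqrt{Q}\,\dot z = 1$, whose solutions are $w(t) = w_0 + t$, i.e. horizontal lines traversed at unit speed. This shows that, as point sets, the integral curves of $v_Q$ and the horizontal trajectories of $Q\,dz^2$ coincide on every such neighborhood, completing the local equivalence.

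The main obstacle is global well-definedness, caused by the multivaluedness of $\sqrt{Q}$. Since the charges satisfy $\sigma_k \in \tfrac12\mathbb{Z}$, the orders $2\sigma_k$ of $Q$ at the marked points $q_k$ can be odd, so $\sqrt{Q}$ has genuine branch points there and $v_Q$ is only locally single-valued; analytic continuation around such a point sends $\sqrt{Q} \mapsto -\sqrt{Q}$ and thus $v_Q \mapsto -v_Q$. I would resolve this by observing that the defining condition $Q\,dz^2>0$ of a horizontal trajectory involves only $Q$ and is insensitive to the branch, and that replacing $v_Q$ by $-v_Q$ merely reverses the orientation (and the sign of the time parameter) of the flow while leaving the trajectory as an unoriented curve unchanged. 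Consequently the unoriented integral curves patch together across branch cuts into globally defined horizontal trajectories. Finally I would note that the zeros and poles of $Q$---the critical points where $v_Q$ either vanishes or blows up---are precisely the singular points of the trajectory structure and are excluded from the regular flow; the local behavior there is governed by the order $2\sigma_k$ through the standard normal form of trajectories near a zero or pole, and is treated separately rather than by the ODE itself.
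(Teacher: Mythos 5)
Your argument is correct and complete: the forward inclusion via $Q(z)\dot z^2=(\sqrt{Q}\,\dot z)^2=1>0$, the converse via the natural coordinate $w=\int^z\sqrt{Q}\,d\zeta$ in which the ODE reads $\dot w=1$, and the observation that the branch ambiguity of $\sqrt{Q}$ (unavoidable when $2\sigma_k$ is odd) only flips orientation and so does not affect the unoriented trajectory. The paper itself states this theorem without proof, treating it merely as a device for numerically plotting the trajectories, so there is no argument to compare against; your write-up supplies exactly the standard justification, including the correct handling of the singular set (zeros and poles of $Q$), which must be excluded from the regular flow and treated by the local normal forms rather than by the ODE.
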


\begin{remark}
This result provides an elementary method to numerically visualize the horizontal trajectories of $Q(z)\,dz^2$, which correspond to the traces of the SLE(0) system.
\end{remark}

In the figures that follow, we map the upper half-plane \( \mathbb{H} \) to the unit disk \( \mathbb{D} \), and illustrate the traces in \( \mathbb{D} \). The zeros of \( Q(z) \) are marked in \textcolor{red}{red}, and the locations of the marked (charged) points are marked in \textcolor{green!60!black}{green}.

\subsection{Counterexample: two marked boundary points}

\begin{figure}[H] 
    \centering
    \includegraphics[width=15cm]{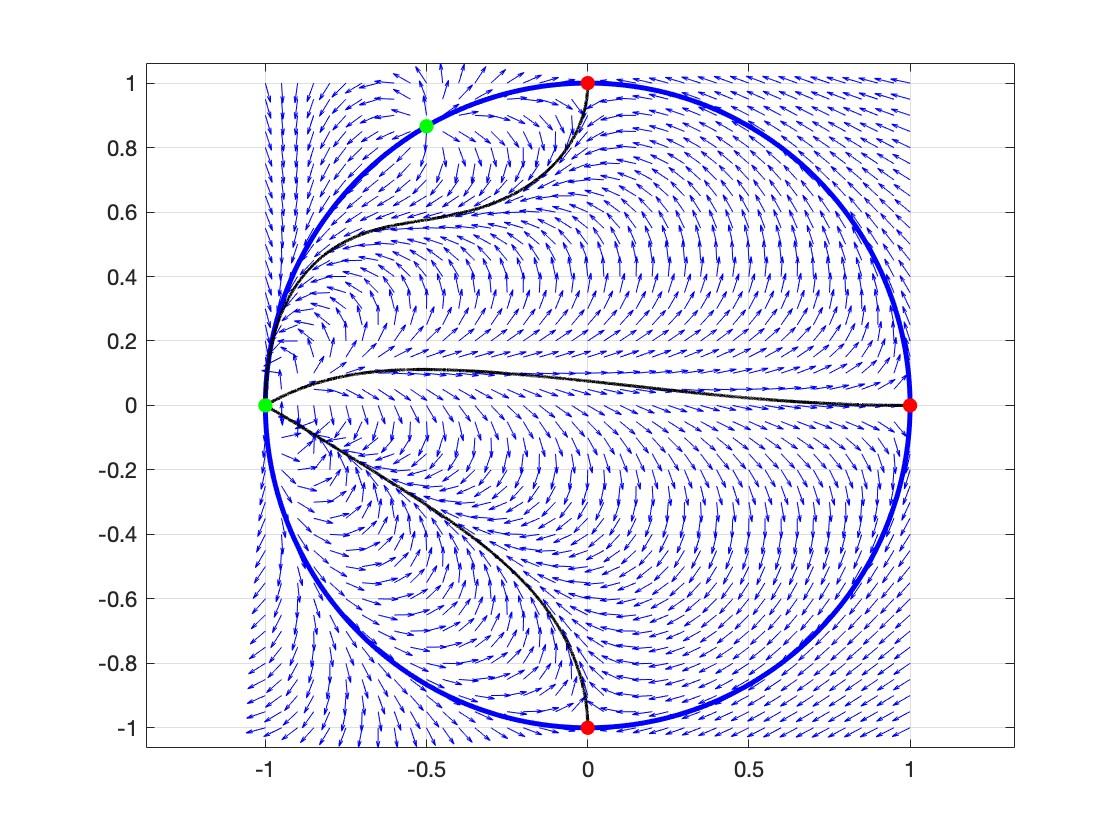}
    \caption{$x_1=-i$, $x_2 = 1$, $x_3 = i$, $q_1=\frac{2\pi i}{3}$, $q_2=-1$}
    \label{fig:two_boundary}
\end{figure}

In Figure~\ref{fig:two_boundary}, we consider the configuration with growth points \( x_1 = -i \), \( x_2 = 1 \), \( x_3 = i \), and marked points \( q_1 = \frac{2\pi i}{3} \), \( q_2 = -1 \). The symmetric divisor is given by
\[
\boldsymbol{\sigma} = x_1 + x_2 + x_3 - q_1 - 4 \cdot q_2.
\]
After normalization, the corresponding square root of the quadratic differential takes the form
\[
\sqrt{Q(z)} = C \cdot \frac{(z - i)(z + i)(z - 1)}{(z - e^{2\pi i / 3})(z + 1)^4},
\]
where \( C = 0.5003 - 0.8662i \) is a normalization constant.

The associated vector field is given by
\[
v_Q(z) = \frac{1}{\sqrt{Q(z)}},
\]
and the horizontal trajectories of the quadratic differential \( Q(z)\,dz^2 \) are the flow lines of this vector field.

In this example, the trajectory starting from \( x_3 = i \) asymptotically converges to the vertical tangent direction.

The MATLAB code used to generate this figure is included on the following page.
\begin{figure}[H]
    \centering
    \begin{minipage}{\linewidth}
        \centering
        \begin{lstlisting}[style=matlabIDE,language=Matlab, caption={MATLAB code for visualizing vector field in Figure~\ref{fig:two_boundary}}, label={code:two_boundary}]
% --- Draw unit circle ---
R = 1.0;
cx = 0; cy = 0;
n = 1000;
alpha = 0:pi/n:2*pi;
x = R*cos(alpha) + cx;
y = R*sin(alpha) + cy;
plot(x, y, '.b', 'LineWidth', 2);  % Plot boundary of the unit disk
grid on; axis equal; hold on;

% --- Coarse grid for quiver field ---
[u, v] = meshgrid(-1:0.05:1, -1:0.05:1);
z = u + v*i;

% Define the square root of the quadratic differential
f = (z - i).*(z + i).*(z - 1).*((z - exp(2*pi*i/3)).^(-1)).*((z + 1).^(-4));
g = exp(-3*pi*i/4)*f / (0.2582 + 0.9661*i);  % Normalized vector field

% Normalize 1/g to unit length
g1 = real(g) ./ sqrt(real(g).^2 + imag(g).^2);
g2 = -imag(g) ./ sqrt(real(g).^2 + imag(g).^2);

% Draw quiver arrows for the coarse field
quiver(u, v, g1, g2, 'color', 'b', 'MaxHeadSize', 0.05);


% --- Finer grid for streamline computation ---
[u, v] = meshgrid(-1:0.002:1, -1:0.002:1);
z = u + v*i;

% Redefine g(z) on finer grid
f = (z - i).*(z + i).*(z - 1).*((z - exp(2*pi*i/3)).^(-1)).*((z + 1).^(-4));
g = exp(-3*pi*i/4)*f / (0.0197 + 0.0737*i);  % Rescaled normalization
g1 = real(g)./sqrt(real(g).^2 + imag(g).^2);
g2 = -imag(g)./sqrt(real(g).^2 + imag(g).^2);

% --- Streamlines from starting points ---
startx = 0; starty = -0.98;
lineobj = streamline(u, v, g1, g2, startx, starty);  % From growth point at x_1 = -i
lineobj.LineWidth = 1.6; lineobj.Color = 'k';

startx = 0.98; starty = 0;
lineobj = streamline(u, v, -g1, -g2, startx, starty);  % From growth point at x_2 = 1
lineobj.LineWidth = 1.6; lineobj.Color = 'k';

startx = 0; starty = 0.98;
lineobj = streamline(u, v, g1, g2, startx, starty);  % From growth point at x_3 = i
lineobj.LineWidth = 1.6; lineobj.Color = 'k';

% --- Marked points  ---
plot(0, 1, '.r', 'MarkerSize', 20);                     % Growth point at x = i
plot(0, -1, '.r', 'MarkerSize', 20);                    % Growth point at x = -i
plot(1, 0, '.r', 'MarkerSize', 20);                     % Growth point at x = 1
plot(cos(2*pi/3), sin(2*pi/3), '.g', 'MarkerSize', 20); % Charge at e^{2pi*i/3}
plot(-1, 0, '.g', 'MarkerSize', 20);                    % Charge at x = -1
        \end{lstlisting}
    \end{minipage}
\end{figure}

\subsection{Counterexample: one marked boundary point, one marked interior point}

\begin{figure}[H]
    \centering
    \includegraphics[width=15cm]{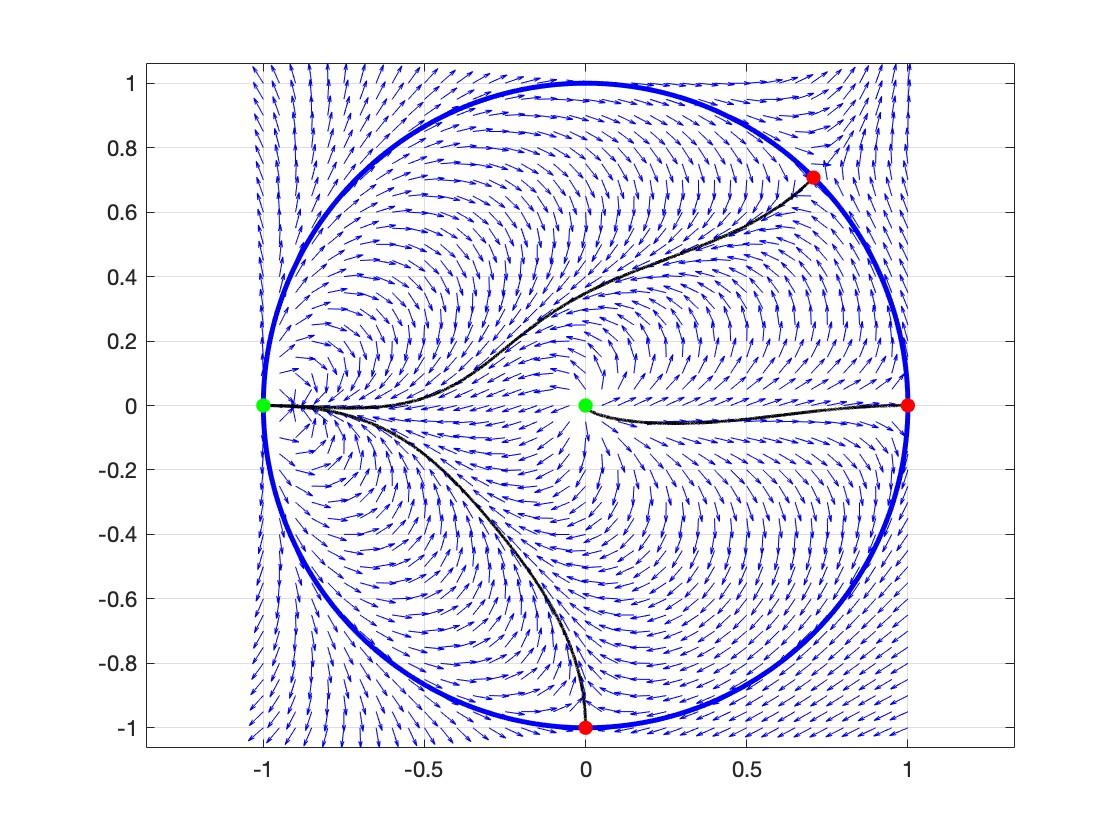}
    \caption{$x_1=-i$, $x_2=1$, $x_3=e^{\frac{\pi i}{4}}$, $q_1=1$, $q_2=-1$, $q_2^*=\infty$}
    \label{fig:boundary_interior}
\end{figure}

In Figure~\ref{fig:boundary_interior}, we consider the configuration with growth points \( x_1 = -i \), \( x_2 = 1 \), \( x_3 = e^{\frac{\pi i}{4}} \), and marked points \( q_1 = 0 \), \( q_2 = \infty \), \( q_3 = -1 \). The symmetric divisor is given by
\[
\boldsymbol{\sigma} = x_1 + x_2 + x_3 - q_1 - q_2 - 3\cdot q_3.
\]
After normalization, the square root of the associated quadratic differential is given by
\[
\sqrt{Q(z)} = C \cdot \frac{(z - e^{\pi i / 4})(z + i)(z - 1)}{z (z + 1)^3}.
\]
where \( C = -1.2071 - 0.5000i \) is the normalization constant.

The associated vector field is given by
\[
v_Q(z) = \frac{1}{\sqrt{Q(z)}},
\]
and the horizontal trajectories of the quadratic differential \( Q(z)\,dz^2 \) are the flow lines of \( v_Q(z) \).

In this example, two trajectories starting from $x_1 = -i$ and $x_3 = e^{\frac{\pi i}{4}}$ asymptotically converge to the same direction.
The figure is generated using \textsc{MATLAB}, and the corresponding code is provided on the following page.

\begin{figure}[H]
    \centering
    \begin{minipage}{\linewidth}
        \centering
        \begin{lstlisting}[style=matlabIDE, language=Matlab, caption={MATLAB code for visualizing vector field in Figure~\ref{fig:boundary_interior}}, label={code:boundary_interior}]
% --- Draw the unit circle ---
R = 1.0; cx = 0; cy = 0;
n = 1000;
alpha = 0:pi/n:2*pi;
x = R*cos(alpha) + cx;
y = R*sin(alpha) + cy;
plot(x, y, '.b', 'LineWidth', 2);   % Plot boundary of the unit disk
grid on; axis equal; hold on;

% --- Coarse vector field for quiver arrows ---
[u, v] = meshgrid(-1:0.05:1, -1:0.05:1);
z = u + v*i;

% Define the square root of the quadratic differential
f = (z - exp(pi*i/4)).*(z + i).*(z - 1).*(z.^(-1)).*((z + 1).^(-3));
g = f./(-0.7071 + 0.2929i);  % Normalized vector field

% Normalize 1/g to unit length
g1 = real(g) ./ sqrt(real(g).^2 + imag(g).^2);
g2 = -imag(g) ./ sqrt(real(g).^2 + imag(g).^2);

% Draw quiver arrows for the coarse field
quiver(u, v, g1, g2, 'color', 'b', 'MaxHeadSize', 0.05);


% --- Finer grid for streamline computation ---
[u, v] = meshgrid(-1:0.002:1, -1:0.002:1);
z = u + v*i;
f = (z - exp(pi*i/4)).*(z + i).*(z - 1).*(z.^(-1)).*((z + 1).^(-3));
g = f./(-0.7071 + 0.2929i);
g1 = real(g)./sqrt(real(g).^2 + imag(g).^2);
g2 = -imag(g)./sqrt(real(g).^2 + imag(g).^2);

% --- Streamlines from starting points ---
startx = 0.99*sqrt(2)/2; starty = 0.99*sqrt(2)/2;
lineobj = streamline(u, v, g1, g2, startx, starty);  % From growth point at x_1 = -i
lineobj.LineWidth = 1.6; lineobj.Color = 'k';

startx = 0; starty = -0.99;
lineobj = streamline(u, v, g1, g2, startx, starty);  % From growth point at x_1 = 1
lineobj.LineWidth = 1.6; lineobj.Color = 'k';

startx = 0.98; starty = 0;
lineobj = streamline(u, v, -g1, -g2, startx, starty);  % From growth point at x_3 = e^{pi*i/4}
lineobj.LineWidth = 1.6; lineobj.Color = 'k';

% --- Marked points  ---
plot(0, -1, '.r', 'MarkerSize', 20);                   % Growth point at x_1 = -i
plot(1, 0, '.r', 'MarkerSize', 20);                    % Growth point at x_2 = 1
plot(cos(pi/4), sin(pi/4), '.r', 'MarkerSize', 20);    % Growth point at x_3 = e^{pi*i/4}
plot(0, 0, '.g', 'MarkerSize', 20);                    % Charge at q = 0
plot(-1, 0, '.g', 'MarkerSize', 20);                   % Charge at q = -1
        \end{lstlisting}
    \end{minipage}
\end{figure}

\subsection{Counterexample: two marked interior points}

\begin{figure}[H]
    \centering
    \includegraphics[width=15cm]{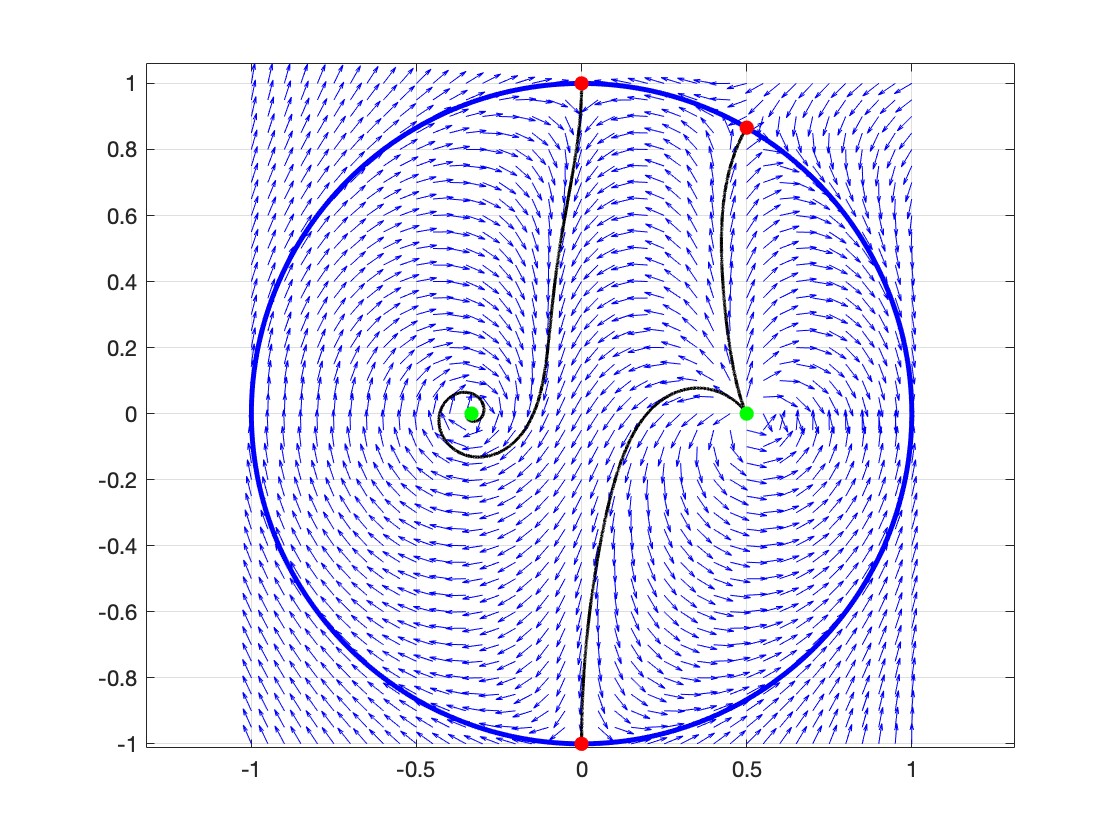}
    \caption{$x_1=-i$, $x_2= e^{\frac{\pi i}{3}}$, $x_3=i$, $q_1=\frac{1}{2}$, $q_1^*=2$, $q_2=-\frac{1}{3}$, $q_2^*=-3$}
    \label{fig:two_interior}
\end{figure}

In Figure~\ref{fig:two_interior}, we consider the configuration with growth points \( x_1 = -i \), \( x_2 = e^{\frac{\pi i}{3}} \), \( x_3 = i \), and marked points \( q_1 = -\frac{1}{3} \), \( q_2 = -3 \), \( q_3 = \frac{1}{2} \), \( q_4 = -2 \). The symmetric divisor is given by
\[
\boldsymbol{\sigma} = x_1 + x_2 + x_3 -  q_1 -  q_2 - \frac{3}{2}\cdot q_3 - \frac{3}{2}\cdot q_4 .
\]
After normalization, the square root of the associated quadratic differential is given by
\[
\sqrt{Q(z)} = C \cdot \frac{(z - e^{\pi i / 3})(z - i)(z + i)}{(z + \tfrac{1}{3})(z + 3)(z - 0.5)^{1.5}(z - 2)^{1.5}}.
\]
where \( C = i \cdot e^{-i\pi/6} \) is the normalization constant.

The associated vector field is given by
\[
v_Q(z) = \frac{1}{\sqrt{Q(z)}},
\]
and the horizontal trajectories of the quadratic differential \( Q(z)\,dz^2 \) are the flow lines of \( v_Q(z) \).

In this example, two trajectories—starting from \( x_1 = -i \) and \( x_2 = e^{\frac{\pi i}{3}} \)—asymptotically converge in the same direction. The trajectory starting from \( x_3 = i \) forms a spiral around the marked point \( q_1 = -\tfrac{1}{3} \), despite the absence of spin.

The figure is generated using \textsc{MATLAB}, and the corresponding code is provided on the following page.

\newpage

\begin{figure}[H]
    \begin{minipage}{\linewidth}
        \centering
        \begin{lstlisting}[language=Matlab, style=matlabIDE, caption={MATLAB code for visualizing the vector field in Figure~\ref{fig:two_interior}}, label={code:two_interior}]
% --- Draw unit circle ---
R = 1.0; cx = 0; cy = 0;
n = 1000;
alpha = 0:pi/n:2*pi;
x = R*cos(alpha) + cx;
y = R*sin(alpha) + cy;
plot(x, y, '.b', 'LineWidth', 2);  % Plot boundary of the unit disk
grid on; axis equal; hold on;

% --- Coarse vector field for quiver plot ---
[u, v] = meshgrid(-1:0.05:1, -1:0.05:1);
z = u + v*i;

% Define the square root of the quadratic differential
f = (z - exp(pi*i/3)).*(z - i).*(z + i) ...
    .* ((z + 1/3).^(-1)) .* ((z + 3).^(-1)) ...
    .* ((z - 0.5).^(-1.5)) .* ((z - 2).^(-1.5));
g = i*f / (exp(i*pi/6));  % Normalized vector field

% Normalize 1/g to unit length
g1 = real(g) ./ sqrt(real(g).^2 + imag(g).^2);
g2 = -imag(g) ./ sqrt(real(g).^2 + imag(g).^2);

% Draw quiver arrows for the coarse field
quiver(u, v, g1, g2, 'color', 'b', 'MaxHeadSize', 0.05);

% --- Finer grid for streamline computation ---
[u, v] = meshgrid(-1:0.002:1, -1:0.002:1);
z = u + v*i;

% Redefine g(z) on the finer grid
f = (z - exp(pi*i/3)).*(z - i).*(z + i) ...
    .* ((z + 1/3).^(-1)) .* ((z + 3).^(-1)) ...
    .* ((z - 0.5).^(-1.5)) .* ((z - 2).^(-1.5));
g = i*f / (exp(i*pi/6));
g1 = real(g) ./ sqrt(real(g).^2 + imag(g).^2);
g2 = -imag(g) ./ sqrt(real(g).^2 + imag(g).^2);

% --- Streamlines from starting points ---
startx = 0; starty = -0.98;
lineobj = streamline(u, v, -g1, -g2, startx, starty);  % From growth point at x_1 = -i
lineobj.LineWidth = 1.6; lineobj.Color = 'k';

startx = cos(pi/3)*0.98; starty = sin(pi/3)*0.98;
lineobj = streamline(u, v, -g1, -g2, startx, starty);  % From growth point at x_2 = e^{pi*i/3}
lineobj.LineWidth = 1.6; lineobj.Color = 'k';

startx = 0; starty = 0.98;
lineobj = streamline(u, v, g1, g2, startx, starty);  % From growth point at x_3 = i
lineobj.LineWidth = 1.6; lineobj.Color = 'k';

% --- Marked points  ---
plot(0, 1, '.r', 'MarkerSize', 20);                   % Growth point at x_1 = i
plot(0, -1, '.r', 'MarkerSize', 20);                  % Growth point at x_3 = -i
plot(cos(pi/3), sin(pi/3), '.r', 'MarkerSize', 20);   % Growth point at x_2 = e^{pi*i/3}
plot(1/2, 0, '.g', 'MarkerSize', 20);                 % Charge at q = 1/2
plot(-1/3, 0, '.g', 'MarkerSize', 20);                % Charge at q = -1/3
        \end{lstlisting}
    \end{minipage}
\end{figure}
\printbibliography
\end{document}